\documentclass{article}
\usepackage[utf8]{inputenc}
\usepackage{amsmath, amsthm, amssymb, amsfonts}
\usepackage{bm}
\usepackage{dsfont}
\usepackage[utf8]{inputenc}
\usepackage{rotate}
\usepackage{tikz}
\usepackage{tikz-cd}
\usepackage[arrow,matrix,curve]{xy} 	
\usepackage{xcolor}
\usepackage{todonotes}
\usepackage{alltt}
\usepackage{amsmath,amssymb}
\usepackage{comment}

\theoremstyle{plain}
\newtheorem{theorem}{Theorem}[section]

\newtheorem{lemma}[theorem]{Lemma}

\theoremstyle{definition}

\newtheorem{remark}[theorem]{Remark}

\newcommand{\norm}[1]{\left\lVert#1\right\rVert}

\title{Linear Bounds for Differentiable Limits of Weak Pair Correlation Functions}
\author{Christian Weiss}
\date{\today}

\begin{document}

\maketitle

\begin{abstract} For $s \geq 0$ and a parameter $0 < \beta < 1$, the weak pair correlation function $f_{N,\beta}(s)$ for the first $N \in \mathbb{N}$ elements of a sequence $(x_n)_{n \in \mathbb{N}} \subset[0,1]$ is evidently non-decreasing in $s$. Moreover, it satisfies $\lim_{N \to \infty} f_{N,\beta}(0) = 0$ if the elements of $(x_n)_{n \in \mathbb{N}}$ are distinct. Beyond these basic observations, little is known in general about the behavior of the limiting function. In this note, we investigate the situation in which the limit $f_\beta(s)=\lim_{N\to\infty} f_{N,\beta}(s)$ exists for all $s\ge 0$ and is differentiable in a neighborhood of the origin. Under these assumptions, we establish the bounds $2s \le f_\beta(s) \le f'_\beta(0)\, s,$ thereby providing general constraints on the limiting function.
\end{abstract}

\section{Introduction}

A sequence of real numbers $(x_n)_{n \in \mathbb{N}} \subset [0,1)$ is said to have weak Poissonian pair correlations with parameter $0 \leq \beta \leq 1$, if
\[
F_{N,\beta}(s) = \frac{1}{N^{2-\beta}} \left\{ 1 \leq i \neq j \leq N \, : \, \norm{x_i-x_j} \leq \frac{s}{N^\beta} \right\},
\]
where $\norm{\cdot}$ denotes the distance to the nearest integer and $N \in \mathbb{N}$, converges to $2s$ as $N \to \infty$ for all $s \geq 0$. If $\beta = 1$, we just speak of Poissonian pair correlations. Weak Poissonian pair correlations are a generic property of uniformly distributed random variables: if $(X_n)_{n \in \mathbb{N}}$ is a sequence of random variables, independently drawn from uniform distribution, then $(X_n)_{n \in \mathbb{N}}$ almost surely has weak Poissonian pair correlations for all $0 \leq \beta \leq 1$. While finding \textit{deterministic} sequences possessing Poissonian pair correlations is a difficult task and only relatively few classes of example have been found, see e.g. \cite{LST24} for a more recent one, it is a lot easier to find examples if $\beta < 1$ by exploiting connections to discrepancy theory, see e.g. \cite{Wei21b}.\\[12pt]
In the non-generic case, if $\lim_{N \to \infty} F_{N,\beta}(s)= f_\beta(s)$ exists for all $s \geq 0$ but $f_\beta(s) \neq 2s$, much less known. If the limiting function $\lim_{N \to \infty} F_{N,\beta}(s)=f_\beta(s)$ exists for all $s\geq 0$ and is continuous, then we say that the sequence $(x_n)_{n \in \mathbb{N}}$ has $(f_\beta,\beta)$-pair correlations, compare \cite{FW25}. It is trivial that $f_\beta(s)$ must be a non-decreasing function. In addition, $f_\beta(0) = 0$ is automatic if the number of multiply appearing elements $n_\beta(N)$ is of order $o\left(N^{1-\tfrac{\beta}{2}}\right)$. This happens for instance if the elements of $(x_n)_{n\in\mathbb{N}}$ are distinct, which imposes only a rather mild restriction on the set of admissible sequences $(x_n)_{n \in \mathbb{N}}$. Also in the cases $n_\beta(N) \asymp c N^{1-\tfrac{\beta}{2}}$ and $n_\beta(N) \gg N^{1-\tfrac{\beta}{2}}$, it is not hard to calculate $f_\beta(0)$ explicitly. Therefore, it is not a great restriction to concentrate on sequences with $f_\beta(0) = 0$ in the following.\\[12pt]
Apart from the two mentioned simple facts, only little is known about general properties of possible limiting functions $f_\beta(s)$. In the case $\beta=1$, it has been shown in \cite[Theorem~1.2]{FW25} that $f_1(s)$ cannot be continuous if the number of different gap lengths, i.e. the distances between geometrically neighboring elements of the finite sequence $x_1,\ldots,x_N$, is uniformly bounded. If $\beta < 1$, this property does not hold as follows e.g. from \cite{Wei21b}. Although there are few examples of explicitly known limiting functions $f_\beta(s)$ with $f_\beta(s) \neq 2s$ that have been discussed in the literature, see \cite{ALP18, Lut20, MS13, Say23, Wei23}, no non-trivial general property of $f_\beta(s)$ has been identified for $\beta < 1$, to the best of the author's knowledge. Nonetheless, it is striking that for $\beta < 1$ all known examples satisfy $f_\beta(s) \geq 2s$, while this is not necessarily the case for $\beta = 1$.\\[12pt]
In this note, we show that the observation $f_\beta(s) \geq 2s$ for $\beta < 1$ is not merely an artifact of the limited number of known examples, but in fact reflects a general property of $f_\beta(s)$. More surprisingly, $f_\beta(s)$ can also be bounded linearly from above if $f_\beta(s)$ is differentiable close to $0$. Indeed, our main result is the following.
\begin{theorem} \label{thm:main} Let $(x_n)_{n \in \mathbb{N}} \subset [0,1]$ and $0 \leq \beta < 1$. If there is a $s_0 >0$ so that the limit
\[
\lim_{N \to \infty} F_{N,\beta}(s) = f_\beta(s)
\]
exists for all $s < s_0$ with $f_\beta(0)=0$ and $f_{\beta}(s)$ is differentiable in this range, then it necessarily holds that
\[
2s \leq f_{\beta}(s) \leq f_\beta'(0)s
\]
for all $s \geq 0$ for which the limit exists. If the limit does not exist, we still have
\[
2s \leq \liminf_{N \to \infty} F_{N,\beta}(s) \leq \limsup_{N \to \infty} F_{N,\beta}(s)  \leq f_\beta'(0)s.
\]
\end{theorem}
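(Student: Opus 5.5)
The plan is to prove two elementary inequalities for the finite-$N$ pair counts and then pass to the limit. One inequality comes from Cauchy--Schwarz and gives the lower bound; the other is a subadditivity estimate and gives the upper bound. The hypothesis $\beta<1$ enters because a window of length $s/N^{\beta}$ contains about $sN^{1-\beta}\to\infty$ points. Boundary effects and diagonal terms are therefore of lower order, costing only $O(N^{\beta-1})$ after normalisation.

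\textbf{Step 1 (averaged lower bound).} Fix $s>0$ and put $L=s/N^\beta$. Cover the circle by $M=\lceil 1/L\rceil$ arcs of length at most $L$, rotated by a shift $\theta\in[0,L)$. Let $n_k(\theta)$ be the number of points in the $k$-th arc. Cauchy--Schwarz gives
\[
\sum_k n_k(\theta)^2-N\ \ge\ \frac{N^2}{M}-N,
\]
and the left-hand side counts ordered pairs $i\neq j$ lying in a common arc. Now average over $\theta$. A pair at distance $d$ lies in a common arc with probability $(1-d/L)_+$. This turns the left-hand side into $\sum_{i\neq j}(1-\norm{x_i-x_j}/L)_+ = L^{-1}\int_0^L \#\{i\ne j:\norm{x_i-x_j}\le t\}\,dt$. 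After normalising by $N^{2-\beta}$ and substituting $t=u/N^\beta$ one gets
\[
\frac1s\int_0^s F_{N,\beta}(u)\,du\ \ge\ s-O(N^{\beta-1}).
\]
Letting $N\to\infty$ (by dominated convergence on $[0,s_0)$, using monotonicity in $u$) yields $\int_0^s f_\beta\ge s^2$. Differentiability at $0$ together with $f_\beta(0)=0$ gives $\int_0^s f_\beta = \tfrac12 f_\beta'(0)s^2+o(s^2)$, hence $f_\beta'(0)\ge 2$. Because the kernel $(1-d/L)_+$ is an average of indicator functions, the same argument with a two-sided tent kernel should also give $F_{N,\beta}(s)\ge 2s\cdot(\text{averaged quantity})$ pointwise.

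\textbf{Step 2 (pointwise lower bound via rescaling).} To upgrade the averaged inequality to $f_\beta(s)\ge 2s$, I would exploit $\beta<1$. I would show that $u\mapsto F_{N,\beta}(u)/u$ is, up to $o(1)$, non-increasing; equivalently that $F_{N,\beta}(\lambda s)\le \lambda F_{N,\beta}(s)+o(1)$ for integers $\lambda$. Given this, $f_\beta(s)/s$ is non-increasing. Combining with $\int_0^s f_\beta\ge s^2$: if $f_\beta(s_1)<2s_1$ for some $s_1$, then $f_\beta(u)<2u$ for every $u\ge s_1$, and hence $f_\beta(u)/u$ would be bounded by some $c<2$ on $[s_1,\infty)$. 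The plan is to contradict the integral bound at large $s$, where the left side grows like $cs^2/2$ against $s^2$. The same monotonicity applied to $\liminf$ and $\limsup$ gives the version of the statement without existence of the limit.

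\textbf{Step 3 (upper bound via subadditivity).} The monotonicity of $F_{N,\beta}(s)/s$ rests on an approximate subadditivity, $F_{N,\beta}(ks)\le kF_{N,\beta}(s)+o(1)$. To prove it, I would split each pair at distance $\le ks/N^\beta$ according to which of $k$ consecutive shells of width $s/N^\beta$ its distance lies in. I would then bound each shell count by the count within distance $s/N^\beta$, using a shifted-partition averaging as in Step 1 together with an equidistribution-at-scale-$N^{-\beta}$ estimate. Such an estimate holds because $N^{1-\beta}\to\infty$ makes local counts large, and it must be extracted from the existence of $f_\beta$ near $0$. Granting subadditivity, $f_\beta(s)\le k f_\beta(s/k)$, and letting $k\to\infty$ with differentiability at $0$ gives $f_\beta(s)\le f_\beta'(0)s$. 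The identical argument with $\limsup$ covers values of $s$ for which the limit fails to exist.

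\textbf{Main obstacle.} I expect the approximate subadditivity in Step 3 to be the main obstacle. Pair counts at a larger radius are not obviously controlled by those at a smaller radius; clustered configurations could in principle violate it. So the key is to show that the existence and differentiability of $f_\beta$ on $[0,s_0)$ forbid such clustering at scale $N^{-\beta}$. My first attempt would be to write the shell counts as correlations $\sum_k n_k n_{k+j}$ of the box counts $n_k$ and apply $n_kn_{k+j}\le\frac12(n_k^2+n_{k+j}^2)$. Each shell contribution is then bounded by the diagonal term $\sum_k n_k^2$, which is controlled by the small-$s$ behaviour of $F_{N,\beta}$.
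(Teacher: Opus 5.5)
\textbf{There is a genuine gap: Step~2 rests on an approximate subadditivity that you do not prove and that fails in general.}

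Your Step~1 matches the lower half of what the paper proves. The paper establishes the integral sandwich $s^2\le\int_0^s f_\beta(t)\,\mathrm{d}t\le\frac{f_\beta'(0)}{2}s^2$; the upper half comes from covering a window by $K+1$ sub-windows and Cauchy--Schwarz, $I_{N,\beta}(s)\le(K+1)^2I_{N,\beta}(s/K)$. It then invokes the one-sided forms of Lemma~\ref{lem:equivalence} to get pointwise bounds.

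You replace that last passage by the subadditivity $F_{N,\beta}(\lambda s)\le\lambda F_{N,\beta}(s)+o(1)$, i.e.\ monotonicity of $f_\beta(s)/s$. The deduction in Step~2 from this premise is fine; the premise is the problem. It cannot come from $\beta<1$ plus existence and differentiability of $f_\beta$ near $0$. Nothing in such an argument uses that the point sets are nested, and for triangular arrays the claim is false. As you feared, modulated configurations violate it, and the hypotheses do not forbid them.

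For each $N$ take the points $x_i=G_N^{-1}\bigl((i-\tfrac12)/N\bigr)$, $1\le i\le N$, with $G_N(x)=x+\frac{\sin(2\pi Lx)}{2\pi L}$ and $L=\lfloor N^\beta\rfloor$. This is density $1+\cos(2\pi Lx)$, with autocorrelation $1+\frac12\cos(2\pi Lw)$. The quantile points have discrepancy $O(1/N)$ relative to $G_N$, so one checks that for every $s\ge0$
\[
F_{N,\beta}(s)\longrightarrow f(s)=2s+\frac{\sin(2\pi s)}{2\pi}.
\]
This $f$ is smooth, with $f(0)=0$, $f'(0)=3$, and $\int_0^s f=s^2+\frac{1-\cos(2\pi s)}{4\pi^2}\ge s^2$. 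Yet $f(3/2)=3>2f(3/4)=3-\frac1\pi$, so subadditivity fails. Even $f(3/4)=\frac32-\frac1{2\pi}<\frac32$.

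So the pointwise lower bound cannot follow from the integral bound plus regularity at $0$ by any argument that treats one $N$ at a time. It would need the sequence structure, and your Steps~2--3 contain no such ingredient. You cannot borrow the paper's final step either. A lower bound on $F_{N,\beta}(s)$ via $\frac1\varepsilon\int_{s-\varepsilon}^sF_{N,\beta}$ needs an \emph{upper} bound on $\int_0^{s-\varepsilon}F_{N,\beta}$. The array above satisfies $\liminf_N\int_0^sF_{N,\beta}\ge s^2$ for all $s$, but not $\liminf_N F_{N,\beta}(3/4)\ge\frac32$.

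\textbf{The upper bound can be rescued by your own last idea, provided you give up the constant $k$.}

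Cut the circle into arcs of length $h=s/N^\beta$ (up to rounding $1/h$ to an integer), with counts $n_i$. Every ordered pair $a\ne b$ at distance $\le kh$ lies in two arcs whose cyclic indices differ by at most $k$. Since $\sum_i n_in_{i+l}\le\sum_i n_i^2$ for each offset $l$, the number of such pairs is at most $(2k+1)\sum_i n_i^2-N$. Averaging over the shift exactly as in your Step~1 gives
\[
F_{N,\beta}(ks)\le(2k+1)\Bigl(\frac1s\int_0^s F_{N,\beta}(u)\,\mathrm{d}u+N^{\beta-1}\Bigr).
\]
Put $s=S/k<s_0$, let $N\to\infty$, and use $\int_0^s f_\beta=\frac{f_\beta'(0)}{2}s^2+o(s^2)$. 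This yields
\[
\limsup_N F_{N,\beta}(S)\le\frac{2k+1}{2k}f_\beta'(0)S+o(1)\quad\text{as } k\to\infty,
\]
hence $\limsup_N F_{N,\beta}(S)\le f_\beta'(0)S$.

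Both features of this bound matter. Using $F_{N,\beta}(s)$ instead of the tent average loses a factor $2$. Replacing $2k+1$ by $k$ is false, by the example above. This is the pointwise counterpart of the paper's covering estimate. It gives the upper half of the theorem, including the $\limsup$ version, without Lemma~\ref{lem:equivalence}.
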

Theorem~\ref{thm:main} may be seen as a generalization of \cite[Theorem~2]{HZ24}, where it was shown that $\lim_{N \to \infty} F_{N,\beta}(s) = 2s$ for $s < s_0$ implies that $(x_n)_{n \in \mathbb{N}}$ has weak Poissonian pair correlations for $\beta$. Moreover, the result shows that the pair correlation statistic in a neighborhood of $0$ already determines its behavior on the entire real line.\\[12pt]
From \cite[Theorem~4]{HZ24} it follows that weak Poissonian pair correlations with parameter $\beta$ imply weak Poissonian pair correlations with parameter $\alpha$ for $0 \leq \alpha < \beta \leq 1$. It is now natural to ask if a similar property holds true for continuous limits $f_\beta(s)$ of the pair correlation statistic $F_{N,\beta}(s)$. Indeed, we show in Theorem~\ref{thm:monotone} for $0 \leq \alpha < \beta < 1$ that necessarily $f_{\alpha}(s)\leq f_{\beta}(s)$ for all $s \geq 0$ if the two limiting function exist. In the special case $f_{\beta}(s)=2s$, it is not difficult to see that this result implies \cite[Theorem~4]{HZ24}. Having this observation in mind, it might come as a surprise that the monotonicity of functions is not anymore true for $\beta = 1$ as the example from \cite{Wei23} shows. The reason why this happens will become visible from the proof of Theorem~\ref{thm:monotone}

\section{Proof of the results}
We start by mentioning two properties which are equivalent to $(f_\beta,\beta)$-pair correlations. Despite that the proof of the following lemma is in large parts in parallel to \cite[Lemma~9]{HZ24}, some additional arguments are required for continuous $f(s) \neq 2s$ (we do not assume that $f$ is differentiable here). Therefore, we include the proof in this note for the sake of completeness. For brevity of notation, we moreover define 
    \[
    I_{N,\beta}(s) := \int_0^1 F_{N,\beta}(t,s)^2 \mathrm{d}t,
    \]
where
    \[
    F_{N,\beta}(t,s) := \frac{1}{N^{1-\beta}} \left\{ 1 \leq n \leq N \, : \, \norm{x_n-t}\leq \frac{s}{N^\beta} \right\}.
    \]
for $t \in \mathbb{R}$. Note that $F_{N,\beta}(t,s)$ is $1$-periodic in $t$.
\begin{lemma} \label{lem:equivalence} Let $(x_n)_{n \in \mathbb{N}} \subset [0,1]$ be a sequence, $s_0 >0$, and $0 \leq \beta \leq 1$ and $f_\beta: \mathbb{R}^+_0 \to \mathbb{R}^+_0$ be a continuous function. Then the following are equivalent.
\begin{itemize}
    \item[(i)] The $\beta$-pair correlation function satisfies 
    \[
    \lim_{N \to \infty} F_{N,\beta}(s) = f_\beta(s), \qquad \mathrm{for\  all\ } s < s_0.
    \]
    \item[(ii)] The following equality holds
    \[
    \lim_{N \to \infty} \int_0^s F_{N,\beta}(t) \mathrm{d}t = \int_0^s f_\beta(t) \mathrm{d}t \qquad \mathrm{for\  all\ } s < s_0.
    \]
    \item[(iii)] The integral $I_{\beta,N}(s)$ satisfies
    \[
    \lim_{N \to \infty} I_{N,\beta}(s) = \begin{cases} \int_0^s f_\beta(t) \mathrm{d}t + s & \beta = 1\\ 
    \int_0^s f_\beta(t) \mathrm{d}t & \beta < 1
    \end{cases}
    \qquad \mathrm{for\  all\ } s < s_0.
    \]
\end{itemize}
The equivalence of three analogous properties also holds true if $\lim_{N \to \infty}$ and $=$ in each of the equations are either replaced by $\limsup_{N \to \infty}$ and $\leq$ or $\liminf_{N \to \infty}$ and $\geq$. 
\end{lemma}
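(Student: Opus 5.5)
We prove (ii)$\Leftrightarrow$(iii) by an exact identity linking $I_{N,\beta}(s)$ to $\int_0^s F_{N,\beta}(t)\,\mathrm{d}t$, and (i)$\Leftrightarrow$(ii) by monotonicity of $F_{N,\beta}$ in $s$ together with continuity of $f_\beta$.

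\textbf{The identity.} Expanding the square gives
\[
I_{N,\beta}(s) = \frac{1}{N^{2-2\beta}} \sum_{m,n=1}^N \int_0^1 \mathds{1}_{\{\norm{x_m-t}\le s/N^\beta\}}\mathds{1}_{\{\norm{x_n-t}\le s/N^\beta\}}\,\mathrm{d}t .
\]
The inner integral is the length of the intersection of two arcs of length $2s/N^\beta$ on the circle centred at $x_m$ and $x_n$. For $2s/N^\beta<1/2$, i.e.\ for $N$ large, this length is $\max(2s/N^\beta-\norm{x_m-x_n},0)$. This equals $\int_0^{2s} N^{-\beta}\mathds{1}_{\{\norm{x_m-x_n}\le u/N^\beta\}}\,\mathrm{d}u$. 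Diagonal terms contribute $N\cdot 2s N^{-\beta}/N^{2-2\beta}=2sN^{\beta-1}$, which tends to $2s$ if $\beta=1$ and to $0$ if $\beta<1$. For the off-diagonal terms, the substitution $u=2t$ turns the normalisation $N^{-\beta}/N^{2-2\beta}=N^{\beta-2}$ into the one defining $F_{N,\beta}$, so they contribute
\[
\int_0^{2s} F_{N,\beta}(u)\,\mathrm{d}u\cdot\tfrac12\cdot(\text{constant}).
\]
I will track the constants carefully, since the lemma as stated pairs $\int_0^s f_\beta$ with $s$ rather than $2s$. The expected outcome is an identity
\[
I_{N,\beta}(s) = c_1\int_0^{c_2 s} F_{N,\beta}(t)\,\mathrm{d}t + c_3\, sN^{\beta-1}
\]
with explicit constants, presumably normalised so that it matches (iii) via the convention $\norm{x_i-x_j}\le s/N^\beta$. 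This identity is exact for every $s$ once $N$ is large. Hence (ii)$\Leftrightarrow$(iii) follows immediately, with the same limit, $\limsup$ or $\liminf$ on both sides. There is one nuisance: the range $s<s_0$ may need to be rescaled by the constant $c_2$. I would handle this by noting that the statements are for all $s<s_0$ and adjusting the constants accordingly.

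\textbf{(i)$\Rightarrow$(ii).} $F_{N,\beta}(t)$ is non-decreasing in $t$ and bounded on $[0,s]$ by $F_{N,\beta}(s)$, which converges and is therefore eventually bounded. Dominated convergence then gives (ii). The $\limsup$ version uses reverse Fatou, and the $\liminf$ version uses Fatou.

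\textbf{(ii)$\Rightarrow$(i).} This is the main step, and the one where continuity of $f_\beta$ replaces the linearity used in \cite[Lemma~9]{HZ24}. For $h>0$ with $s+h<s_0$, monotonicity gives
\[
\frac1h\int_{s-h}^{s}F_{N,\beta}(t)\,\mathrm{d}t \le F_{N,\beta}(s) \le \frac1h\int_s^{s+h}F_{N,\beta}(t)\,\mathrm{d}t.
\]
Taking $\limsup$ and $\liminf$ and using (ii) at the two endpoints yields
\[
\frac1h\int_{s-h}^{s}f_\beta \le \liminf_{N} F_{N,\beta}(s) \le \limsup_{N} F_{N,\beta}(s)\le \frac1h\int_s^{s+h}f_\beta .
\]
Letting $h\to0$ and using continuity of $f_\beta$, both averages tend to $f_\beta(s)$. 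At $s=0$ only the upper bound is needed, together with $F_{N,\beta}\ge 0$.

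\textbf{One-sided versions.} For the $\limsup$ version, assume $\limsup_N\int_0^s F_{N,\beta}\le\int_0^s f_\beta$ for all $s$. Then the difference-quotient trick fails, because $\limsup$ is not additive. I would instead use only the upper bound with a single integral: $F_{N,\beta}(s)\le \frac1h\int_s^{s+h}F_{N,\beta}$ is still a difference of two integrals. This is the point I expect to be the real obstacle. The fix I would try is to bound $\int_s^{s+h}F_{N,\beta}\le\int_0^{s+h}F_{N,\beta}-\int_0^{s}F_{N,\beta}$ and control the subtracted term from below via monotonicity, or, failing that, interpret the one-sided equivalence as pertaining to the hypotheses actually used later, namely upper bounds propagated through the exact identity of the first step.
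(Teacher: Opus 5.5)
The gap is the one you flagged yourself: the one-sided forms of (ii)$\Rightarrow$(i). Neither of your proposed fixes can close it, because the implication is false.

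\emph{Counterexample.} Take any sequence with weak Poissonian pair correlations, so that $F_{N,\beta}(s)\to2s$ and $\int_0^sF_{N,\beta}(t)\,\mathrm{d}t\to s^2$. Let $f(s)=\min(3s,3)$, which is continuous and non-decreasing. Then $\int_0^sf=\tfrac32s^2\ge s^2$ for $s\le1$, and $\int_0^sf=3s-\tfrac32\ge s^2$ for $1\le s\le2$, since the roots of $s^2-3s+\tfrac32$ are $(3\pm\sqrt3)/2$. So the $\limsup$ form of (ii) holds with $s_0=2$. Yet $\limsup_NF_{N,\beta}(s)=2s>3=f(s)$ for $s\in(\tfrac32,2)$. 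A mirror-image $f$, with $\int_0^sf\le s^2$ but overshooting $2s$ somewhere, breaks the $\liminf$ form in the same way.

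The paper's proof gets the one-sided claims only from the sentence that each equality ``may be replaced by $\le$''. That fails exactly where you said: $\int_s^{s+h}F_{N,\beta}$ is a difference of two integrals, and one-sided control of $\int_0^{\cdot}F_{N,\beta}$ gives no control of it. What monotonicity does give is $\limsup_NF_{N,\beta}(s)\le\inf_{0<h<s_0-s}\frac1h\int_0^{s+h}f$ and $\liminf_NF_{N,\beta}(s)\ge\frac1s\int_0^sf$. These are exactly the factor-$2$ losses discussed in the paper's Remark.

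Your fallback of restricting to the hypotheses ``actually used later'' does not help. The final step of the proof of Theorem~\ref{thm:main} is precisely this one-sided (ii)$\Rightarrow$(i), with $f(t)=2t$ and $f(t)=f_\beta'(0)t$, and it fails even for $f(t)=2t$. For $0<\beta<1$, the $\liminf$ form of (ii) with $f(t)=2t$ holds for every sequence, by $I_{N,\beta}(s)\ge\bigl(\int_0^1F_{N,\beta}(t,s)\,\mathrm{d}t\bigr)^2$ and the identity. Now take rapidly growing $N_k$ and place the terms with index in $(N_{k-1},N_k]$ in about $N_k^\beta$ equally spaced tight clusters of about $N_k^{1-\beta}$ points each. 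This sequence has $F_{N_k,\beta}(s)\to1<2s$ for $s\in(\tfrac12,1)$.

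Everything else is sound and follows essentially the paper's route. You use the same exact identity for (ii)$\Leftrightarrow$(iii) and the same squeeze for the two-sided (ii)$\Rightarrow$(i). For (i)$\Rightarrow$(ii) you use dominated convergence instead of the paper's Riemann sums, which is cleaner and does not even need continuity of $f_\beta$. Fatou, reverse Fatou and the identity correctly give the one-sided forms of (i)$\Rightarrow$(ii)$\Leftrightarrow$(iii).

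Your worry about constants is justified. With $F_{N,\beta}(t,s)$ counting $\norm{x_n-t}\le s/N^\beta$, the off-diagonal part is exactly $\int_0^{2s}F_{N,\beta}(u)\,\mathrm{d}u$, with no substitution and no factor $\tfrac12$. So $I_{N,\beta}(s)=\int_0^{2s}F_{N,\beta}(u)\,\mathrm{d}u+2sN^{\beta-1}$, and (iii) as written is off by a rescaling of $s$ and of the range $s<s_0$. The paper's computation silently uses balls of radius $s/(2N^\beta)$, i.e.\ the convention $\norm{x_n-t}\le s/(2N^\beta)$, under which (iii) is exact.

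Finally, at $s=0$ your upper bound only gives $\limsup_NF_{N,\beta}(0)\le f_\beta(0)$. The matching lower bound needs $f_\beta(0)=0$, and otherwise (i) can genuinely fail at $s=0$. For example, with $\beta=1$, distinct points arranged in pairs $x_{2k}=x_{2k-1}+4^{-k}$ give $F_{N,1}(0)=0$ but $F_{N,1}(s)\ge1-o(1)$ for every $s>0$.
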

\begin{proof} First, we show the equivalence of (i) and (ii). Hence assume that (i) is true and let $s < s_0$ be arbitrary. Choose $M \geq 1$ and consider $0 \leq j \leq M-1$. Since $F_{N,\beta}(t)$ is monotonic in $t$, it holds that
\[
\frac{s}{M}F_{N,\beta}\left( \frac{js}{M} \right) \leq \int_{j/M}^{(j+1)/M} F_{N,\beta}\left( t \right) \textrm{d} t \leq \frac{s}{M}F_{N,\beta}\left( \frac{(j+1)s}{M} \right) 
\]
and summing up yields
\[
\frac{s}{M}\sum_{j=0}^{M-1} F_{N,\beta}\left( \frac{js}{M} \right) \leq \int_{0}^{s/M} F_{N,\beta}\left( t \right) \textrm{d} t \leq \frac{s}{M} \sum_{j=1}^{M}F_{N,\beta}\left( \frac{js}{M} \right) 
\]
Letting $N \to \infty$ (and assuming the existence of the limit in the middle term for a second) it follows that
\[
\frac{s}{M} \sum_{j=0}^{M-1} f_\beta\left(\frac{js}{M}\right) \leq \lim_{N \to \infty} \int_{0}^{s/M} F_{N,\beta}\left( t \right) \textrm{d} t \leq \frac{s}{M} \sum_{j=1}^M f_\beta\left( \frac{js}{M} \right)
\]
Since $f_\beta$ is continuous, it is integrable on $[0,s]$. Moreover, the sum on the right as well as the one on the left are Riemann sums and thus converge to $\int_0^sf(t)\mathrm{d}t$ as claimed.\\[12pt]
If (ii) holds, let $\varepsilon > 0$ be arbitrary with $s + \varepsilon < s_0$. Using the monotonicity of $F_{N,\beta}(s)$ again, we obtain
\[
\frac{1}{\varepsilon} \int_{s - \varepsilon}^s F_{N,\beta}(t)\mathrm{d}t \leq F_{N,\beta}(s) \leq \frac{1}{\varepsilon} \int_{s}^{s+\varepsilon} F_{N,\beta}(t)\mathrm{d}t.
\]
Taking the limit for $N \to \infty$ and applying (ii), it follows that
\[
\frac{1}{\varepsilon} \int_{s-\varepsilon}^{s} f_\beta(t)\mathrm{d}t \leq \lim_{N \to \infty} F_{N,\beta}(s) \leq \frac{1}{\varepsilon} \int_{s}^{s+\varepsilon} f_\beta(t)\mathrm{d}t
\]
and the terms on the left and on the right converge to $f_\beta(s)$ as $\varepsilon \to 0$ by the Fundamental Theorem of Calculus (or the mean value theorem for integrals). Hence, the claim follows.\\[12pt]
Next we show that (ii) and (iii) are equivalent. For this purpose, let $\lambda(\cdot)$ denote the 1-dimensional Lebesgue measure and $B(x,r)$ be the ball of radius $r$ centered at $x$. Then $I_{N,\beta}(s)$ equals
\begin{align*}
I_{N,\beta}(s) & = \frac{1}{N^{2-2\beta}} \sum_{i,j=1}^N\lambda \left( B\left(x_i,\frac{s}{2N^\beta}\right) \cap B\left(x_i,\frac{s}{2N^\beta}\right)  \right)\\
&=\frac{1}{N^{2-2\beta}} \sum_{\substack{i,j=1\\i\neq j}}^N\lambda \left( B\left(x_i,\frac{s}{2N^\beta}\right) \cap B\left(x_i,\frac{s}{2N^\beta}\right)  \right) + \frac{1}{N^{2-2\beta}} \cdot N \cdot \frac{s}{N^\beta}\\
& = \frac{s}{N^{2-\beta} } \sum_{\substack{i,j=1\\i\neq j}}^N \max\left(1 - \frac{\norm{x_i-x_j}}{s/N^\beta} ,0\right) + \frac{s}{N^{1-\beta}}
\end{align*}
and 
\begin{align*}
    \int_0^s F_{N,\beta}(t) \mathrm{d}t & = \int_0^s \frac{1}{N^{2-\beta}} \sum_{\substack{i,j=1\\i\neq j}}^N \mathds{1}_{B(0,t/N^\beta)}(x_i-x_j) \mathrm{d}{t}\\
    & = \frac{1}{N^{2-\beta}} \sum_{\substack{i,j=1\\i\neq j}}^N \int_0^1 \mathds{1}_{[\norm{x_i-x_i}N^\beta,\infty)}(t) \mathrm{d}t\\
    & = \frac{s}{N^{2-\beta}} \sum_{\substack{i,j=1\\i\neq j}}^N \max\left(1 - \frac{\norm{x_i-x_j}}{s/N^\beta} ,0\right)
\end{align*}
which implies the equivalence of (ii) and (iii) as $N \to \infty$.\\[12pt]
Finally, note that each equality in the steps above may be replaced by $\leq$ if we use $\limsup_{N \to \infty}$ instead of $\lim_{N \to \infty}$. Similarly, moving to $\liminf_{N \to \infty}$ requires to use $\geq$ instead of $=$.
\end{proof}
Based on the lemma, we can now use the equivalent properties (i), (ii) and (iii) in parallel to show Theorem~\ref{thm:main}. In the proof, it will turn out to be essential that $f$ is indeed differentiable close to $0$.
\begin{proof}
According to the assumptions there is some differentiable function $f_\beta$ with
\[
    \lim_{N \to \infty} F_{N,\beta}(t) = f_\beta(t), \qquad \forall t < s_0.
\]
Now let $s>0$ be arbitrary and choose $K>0$ large enough such that $s/K < s_0$. Then we have the inclusion
\[
B\left(t,\frac{s}{2N^\beta}\right) \subset \bigcup_{|l| \leq K/2} B\left(t+\frac{ls}{KN^\beta},\frac{s}{2KN^\beta}\right)
\]
and by applying the Cauchy-Schwartz inequality and (iii) in Lemma~\ref{lem:equivalence} it thus follows that
\begin{align*}
\int_0^1 F_{N,\beta}(t,s)^2 \mathrm{d}t &\leq \int_0^1 \left( \sum_{|l| \leq K/2}F_{N,\beta}\left(t+\frac{ls}{KN^\beta},\frac{s}{K}\right)\right)^2 \mathrm{d}t \\
&\leq \int_0^1 (K+1)  \sum_{|l| \leq K/2}F_{N,\beta}\left(t+\frac{ls}{KN^\beta},\frac{s}{K}\right)^2 \mathrm{d}{t} \\
& = (K+1)^2 \int_0^1 F_{N,\beta}\left(t, \frac{s}{K} \right)^2 \mathrm{d}t\\
& = (K+1)^2 \int_0^{s/K} f_\beta(t)\mathrm{d}t.
\end{align*}
As $f_\beta(0)=0$ and $f_\beta$ is differentiable in a neighborhood of $0$, the limit of the integral on the right hand side can for $K \to \infty$ be evaluated as 
\begin{align*}
\lim_{K \to \infty} (K+1)^2 \int_0^{s/K} f_\beta(t)\mathrm{d}t & = \lim_{K \to \infty}(K+1)^2 \int_{0}^{s/K} (f_\beta'(0)t + o(t)) \mathrm{d}t\\
&=  \lim_{K \to \infty} (K+1)^2 f_\beta'(0) \frac{s^2}{2K^2} + \lim_{K \to \infty} (K+1)^2 o\left(\frac{1}{K^2}\right)\\
& = \frac{f_\beta'(0)}{2} s^2.
\end{align*}
By once again applying the Cauchy-Schwartz inequality, the well-known lower bouund
\[
I_{N,\beta}(s) = \int_0^1 F_{N,\beta}(t,s)^2\mathrm{d}t \geq \left( \int_0^1 F_{N,\beta}(t,s)\mathrm{d}t \right)^2 = s^2
\]
follows. Using the equivalence of (ii) in (iii) in Lemma~\ref{lem:equivalence}, we arrive at
\begin{align} \label{ineq:integral}
s^2 \leq \int_0^s f_\beta(t)\mathrm{d}t \leq \frac{f_\beta'(0)}{2}s^2.
\end{align}
Finally, we apply the equivalent property (i) in Lemma~\ref{lem:equivalence} which yields the claim on the limit. As a corresponding statement holds for $\liminf$ and $\limsup$ instead of $\lim$ according to Lemma~\ref{lem:equivalence}, also 
\[
2s \leq \liminf_{N \to \infty} F_{N,\beta}(s) \leq \limsup_{N \to \infty} F_{N,\beta}(s)  \leq f_\beta'(0)s.
\]
follows. 
\end{proof}

\begin{remark} One might be tempted to assume that \eqref{ineq:integral} directly implies the inequality on $f_\beta(s)$ without referring to Lemma~\ref{lem:equivalence}. However, we would then get an additional factor of $1/2$ and $2$ respectively in the inequalities as the following example shows: Let the integral $\int_0^s f(t)\mathrm{d}t$ of an arbitrary function $f$ be bounded from above by $cs^2$. Hence $f(s)=2cs$ for $0\leq s\leq 1$ and $f(s)=c$ for $1< s \leq (c/2+\delta)$ for $\delta>0$ is a function compatible with the upper bound since $\int_0^{c/2+\delta} f(t)\mathrm{d}t=c^2 + \left(\frac{c}{2}+\delta-1\right)c = \frac{3}{2}c^2 + c\delta-c$, but $f(c/2+\delta)=c<c+2\delta$. At the same time $f(s)$ is compatible with the lower bound $\int_0^s f(t)\mathrm{d}t > s^2$ if $c>\sqrt{2}$ as long as $\delta > 0$ is small enough so that
\[
(c+\delta)^2 = c^2 + 2c\delta + \delta^2 < \frac{3}{2}c^2 + c\delta - 1
\]
A similar example can be constructed to also show that the upper bound cannot be improved by a factor of $2$ in general.\end{remark}

\paragraph{Monotonicity of the Pair Correlation Statistic in $\beta$} According to \cite[Theorem~4]{HZ24} weak Poissonian pair correlations with parameter $\beta$ imply weak Poissonian pair correlations with parameter $\alpha$ for $0 \leq \alpha < \beta \leq 1$. For continuous limits we show here the seemingly slightly weaker property, that if a sequence has $(f_\beta,\beta)$ and $(g_\alpha,\alpha)$ pair correlations with $0 \leq \alpha < \beta \leq 1$, then $g_\alpha(s) \leq f_\beta(s)$ for all $s \geq 0$. However, note that \cite[Theorem~4]{HZ24} is a consequence of this result for $\beta < 1$ and hence Theorem~\ref{thm:monotone} may be regarded as its generalization.
\begin{theorem} \label{thm:monotone} Let $(x_n)_{n \in \mathbb{N}} \subset [0,1]$ and let $0 \leq \alpha < \beta < 1$. Furthermore assume that $(x_n)_{n \in \mathbb{N}}$ has $(f_\beta,\beta)$- as well as $(g_\alpha,\alpha)$-pair correlations. Then it holds for all $s \geq 0$ that
\[
g_\alpha(s) \leq f_\beta(s).
\]    
Without assuming that $\lim_{N \to \infty} F_{N,\alpha}(s)$ exists, it still holds that
\[
\limsup_{N \to \infty} F_{N,\alpha}(s) \leq f_\beta(s).
\]
\end{theorem}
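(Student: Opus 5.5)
The plan is to compare the $\alpha$-statistic with the $\beta$-statistic through the quadratic quantity $I_{N,\cdot}(s)$ from Lemma~\ref{lem:equivalence}(iii), reusing the covering and Cauchy--Schwarz argument from the proof of Theorem~\ref{thm:main}. There, the covering was by a \emph{fixed} number $K$ of balls; here I let the number grow with $N$.

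\textbf{Step 1 (covering).} Fix $s>0$ and put $K=K_N:=\lceil N^{\beta-\alpha}\rceil$. The ball $B\left(t,\frac{s}{2N^\alpha}\right)$ is covered by the $K+1$ balls $B\left(t+\frac{ls}{N^\beta},\frac{s}{2N^\beta}\right)$, $|l|\le K/2$. Comparing normalisations gives
\[
F_{N,\alpha}(t,s)\le N^{\alpha-\beta}\sum_{|l|\le K/2}F_{N,\beta}\left(t+\frac{ls}{N^\beta},s\right).
\]

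\textbf{Step 2 (Cauchy--Schwarz and periodicity).} Square, apply Cauchy--Schwarz, integrate over $t$, and use the $1$-periodicity of $F_{N,\beta}(\cdot,s)$. This gives
\[
I_{N,\alpha}(s)\le (K+1)^2N^{2(\alpha-\beta)}\,I_{N,\beta}(s)=(1+o(1))\,I_{N,\beta}(s).
\]
Since $\alpha<\beta<1$, the case $\beta<1$ of Lemma~\ref{lem:equivalence}(iii) applies on both sides, with the $\limsup$ version on the left. Hence, for every $s\ge0$,
\[
\limsup_{N\to\infty}\int_0^sF_{N,\alpha}(t)\,\mathrm{d}t\le\int_0^sf_\beta(t)\,\mathrm{d}t,
\]
and in particular $\int_0^sg_\alpha\le\int_0^sf_\beta$ when $g_\alpha$ exists.

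\textbf{Step 3 (from integrals to pointwise values).} This is the step I expect to be the main obstacle. The Remark after Theorem~\ref{thm:main} shows that an inequality between primitives does not by itself give a pointwise inequality. Lemma~\ref{lem:equivalence} passes from (ii) to (i) by writing $F_{N,\alpha}(s)\le\frac1\varepsilon\int_s^{s+\varepsilon}F_{N,\alpha}$. That uses a difference of primitives, for which an upper bound on $\limsup$ alone is not enough.

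I would therefore aim for a \emph{localised} version of Step~2. Instead of $I_{N,\cdot}(s)$, I would use the mixed integrals
\[
J_{N,\cdot}(s,\varepsilon)=\int_0^1F_{N,\cdot}(t,s)\,F_{N,\cdot}(t,\varepsilon s)\,\mathrm{d}t.
\]
As in the computation in the proof of Lemma~\ref{lem:equivalence}, $J$ equals a pair sum with a trapezoidal kernel. That kernel has plateau height proportional to $\varepsilon s$ on distances up to about $(1-\varepsilon)s/N^\cdot$ and decays linearly over a window of width $\varepsilon s/N^\cdot$. The diagonal terms $i=j$ vanish in the limit because $\beta<1$. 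Consequently,
\[
\varepsilon s\,F_{N,\cdot}\bigl((1-\varepsilon)s\bigr)\lesssim J_{N,\cdot}(s,\varepsilon)\lesssim\varepsilon s\,F_{N,\cdot}\bigl((1+\varepsilon)s\bigr).
\]

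I would then run the covering argument of Step~1 separately on both factors of $J_{N,\alpha}$, which are covered at scales $s/N^\beta$ and $\varepsilon s/N^\beta$ respectively. Since the cross terms between different $l$ do not decouple, the estimate must be set up carefully, either by pairing the blocks directly or by a weighted Cauchy--Schwarz inequality.

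The target is $\limsup_N J_{N,\alpha}(s,\varepsilon)\le\limsup_N J_{N,\beta}(s,\varepsilon)$. Combined with the two-sided kernel bounds, this gives
\[
\limsup_N F_{N,\alpha}\bigl((1-\varepsilon)s\bigr)\le f_\beta\bigl((1+\varepsilon)s\bigr)+O(\varepsilon).
\]
Letting $\varepsilon\to0$ and using the continuity of $f_\beta$ then yields $\limsup_N F_{N,\alpha}(s)\le f_\beta(s)$. When $g_\alpha$ exists, this is $g_\alpha(s)\le f_\beta(s)$.

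If the mixed Cauchy--Schwarz step resists, my fallback is to apply Step~2 on small $s$-windows via the annulus counts $F_{N,\cdot}(s+\varepsilon)-F_{N,\cdot}(s)$. The indicator of an annulus is a difference of two squared-ball kernels, so one would compare these differences directly. This is the point where I would expect the assumption $\beta<1$ to be used essentially.
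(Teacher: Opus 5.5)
\textbf{Verdict: the proposal has a genuine gap.} Your Step~3 is only announced, not proved, and the tools you name cannot prove it.

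\textbf{Steps~1--2 match the paper.} They are exactly the paper's proof: cover $B\bigl(t,\frac{s}{2N^\alpha}\bigr)$ by about $N^{\beta-\alpha}$ balls at scale $s/N^\beta$, then apply Cauchy--Schwarz, periodicity and Lemma~\ref{lem:equivalence}(iii). This correctly gives $\limsup_N\int_0^sF_{N,\alpha}(t)\,\mathrm{d}t\le\int_0^sf_\beta(t)\,\mathrm{d}t$.

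\textbf{Your doubt about the paper's last step is justified.} The paper finishes in one line by appealing to Lemma~\ref{lem:equivalence}(i), i.e.\ to the $\limsup$ form of (ii)$\Rightarrow$(i). The proof of that implication bounds $F_{N,\alpha}(s)$ by $\frac1\varepsilon\bigl(\int_0^{s+\varepsilon}F_{N,\alpha}-\int_0^sF_{N,\alpha}\bigr)$. Comparing this with $f_\beta$ would require the lower bound $\liminf_N\int_0^sF_{N,\alpha}\ge\int_0^sf_\beta$, which is not available. Even when $g_\alpha$ exists, $\int_0^sg_\alpha\le\int_0^sf_\beta$ for all $s$ does not by itself imply $g_\alpha\le f_\beta$; compare the Remark after Theorem~\ref{thm:main}.

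\textbf{Why the mixed-integral comparison fails.} The target $\limsup_NJ_{N,\alpha}(s,\varepsilon)\le\limsup_NJ_{N,\beta}(s,\varepsilon)$ is never established.
\begin{itemize}
\item In Step~2 the covering works because each cross term from expanding the square is a shifted autocorrelation $\int_0^1F_{N,\beta}(t+\tau,s)F_{N,\beta}(t,s)\,\mathrm{d}t$. Cauchy--Schwarz and periodicity bound it by the unshifted value $I_{N,\beta}(s)$. Equivalently, the triangular pair kernel of $I_{N,\cdot}$ is positive definite, since its Fourier transform is a squared sinc.
\item In $J_{N,\alpha}$ the cross terms are mixed correlations $\int_0^1F_{N,\beta}(t+\tau,s)F_{N,\beta}(t,\varepsilon s)\,\mathrm{d}t$, with shifts $\tau=(l-\varepsilon m)s/N^\beta$. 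The trapezoidal kernel is a convolution of two boxes of different widths, and its Fourier transform changes sign. So nothing makes $\tau=0$ the maximiser.
\item Cauchy--Schwarz, however you weight it, only returns quantities like $\sqrt{I_{N,\beta}(s)\,I_{N,\beta}(\varepsilon s)}$. These dominate $J_{N,\beta}(s,\varepsilon)$ rather than being dominated by it, so the inequality points the wrong way.
\end{itemize}

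\textbf{The annulus fallback has the same defect.} To bound a difference of $I_{N,\alpha}$-type quantities from above, you need a lower bound on the subtracted term in terms of $f_\beta$. The only lower bound at hand is $I_{N,\alpha}(s)\ge s^2$. It matches the upper bound only in the Poissonian case $f_\beta(s)=2s$, which is the situation of \cite{HZ24}.

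\textbf{What remains.} Your argument proves only the integrated inequality. The pointwise statement would need a genuinely local comparison between the scales $N^{-\alpha}$ and $N^{-\beta}$. Neither your proposal nor the paper provides one.
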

From the proof of Theorem~\ref{thm:monotone} it becomes visible that its statement fails for $\beta=1$ because of the additional factor $s$ on the right hand side  of Lemma~\ref{lem:equivalence} (iii).
\begin{proof} Let
\[
M = \left\lceil \frac{N^{\beta-\alpha}}{2} \right\rceil
\]
and recall that 
\[
B\left(t,\frac{s}{2N^\alpha}\right) \subset \bigcup_{|l| \leq M} B\left(t+\frac{ls}{N^\beta},\frac{s}{2N^\beta}\right).
\]
We now apply the Cauchy-Schwartz inequality to get
\begin{align*}
\int_0^1 F_{N,\alpha}(t,s)^2 \mathrm{d}t &\leq \int_0^1 \frac{1}{N^{2(\beta-\alpha)}}\left( \sum_{|l| \leq M}F_{N,\beta}\left(t+\frac{ls}{N^\beta},s\right)\right)^2 \mathrm{d}t \\
&\leq \int_0^1 \frac{N^{\beta-\alpha}+1}{N^{2(\beta-\alpha)}}  \sum_{|l| \leq M}F_{N,\beta}\left(t+\frac{ls}{N^\beta},s\right)^2 \mathrm{d}{t} \\
& =  \frac{N^{\beta-\alpha}+1}{N^{2(\beta-\alpha)}}  \sum_{|l| \leq M} \int_0^1 F_{N,\beta}\left(t, s \right)^2 \mathrm{d}t\\
& \leq  \left( 1 + \frac{3}{N^{\beta-\alpha}}\right) \int_0^{1} F_{N,\beta}\left(t, s \right)^2 \mathrm{d}t.
\end{align*}
Letting $N \to \infty$ and using Lemma~\ref{lem:equivalence} (iii) implies
\[
\int_0^s g_\alpha(t)\mathrm{d}t \leq \int_0^s f_{\beta}(t)\mathrm{d}t.
\]
Hence, the claim follows from Lemma~\ref{lem:equivalence} (i). The claim on the limit superior follows by replacing $\lim$ by $\limsup$ in the lines above.
\end{proof}

\bibliographystyle{alpha}
\bibdata{literatur}
\bibliography{literatur}

\textsc{Ruhr West University of Applied Sciences, Duisburger Str. 100, D-45479 M\"ulheim an der Ruhr,} \texttt{christian.weiss@hs-ruhrwest.de}

\end{document}